\newtheorem{thm}{Theorem}
\newtheorem*{lemma}{Lemma}
\newtheorem*{corollary}{Corollary}
\title[A geometric uncertainty principle]{A geometric uncertainty principle with\\ an application to Pleijel's estimate}
\author{Stefan Steinerberger}\thanks{Mathematisches Institut, Universit\"at Bonn, Endenicher Allee 60, 53115 Bonn, Germany, e-mail: \textsc{steinerb@math.uni-bonn.de}}
\begin{document}
\begin{abstract} One cannot decompose a domain into disks of equal radius:
let $\Omega \subset \mathbb{R}^2$ be an open, bounded domain and $\Omega = \bigcup_{i=1}^N \Omega_i$
be a partition. Denote the Fraenkel asymmetry by $0 \leq \mathcal{A}(\Omega_i) \leq 2$ and write
$$ D(\Omega_i) := \frac{|\Omega_i| - \min_{1 \leq j \leq N}{|\Omega_j|}}{|\Omega_i|}$$
with $0 \leq D(\Omega_i) \leq 1$. For $N$ sufficiently large depending only on $\Omega$, there is an uncertainty principle
$$ \left(\sum_{i=1}^{N}{\frac{|\Omega_i|}{|\Omega|}\mathcal{A}(\Omega_i)}\right)+\left(\sum_{i=1}^{N}{\frac{|\Omega_i|}{|\Omega|}D(\Omega_i)}\right) \geq \frac{1}{60000}.$$
The statement remains true in dimensions $n \geq 3$ for some constant $c_n > 0$. As an application, we give an (unspecified) improvement 
of Pleijel's estimate on the number of nodal domains of a Laplacian eigenfunction similar to recent work of Bourgain and improve another
inequality in the field of spectral partition problems.
\end{abstract}

\maketitle

\section{Introduction}
\subsection{Motivation.}  It is easy to partition $\mathbb{R}^2$ into sets of equal measure that are 'almost' disks 
(the hexagonal packing, for example) and it is also possible to decompose $\mathbb{R}^2$ into disks of different size
(Apollonian packings) -- but obviously not both at the same time. We are interested in a quantitative descriptions of 
this phenomenon.
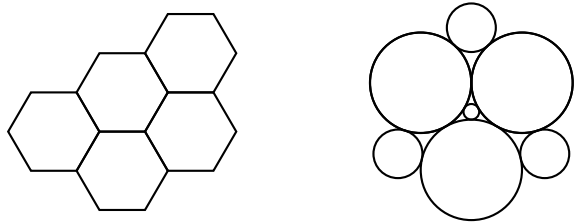
\begin{figure}[h!]
\begin{minipage}[l]{.3\textwidth}
\begin{center}
\begin{tikzpicture}
  \begin{scope}[%
every node/.style={anchor=west,
regular polygon, 
regular polygon sides=6, thick,
draw,
minimum width=1.2cm,
outer sep=0,
},
      transform shape]
    \node (A) {};
    \node (B) at (A.corner 1) {};
    \node (C) at (B.corner 5) {};
    \node (D) at (A.corner 5) {};
    \node (E) at (B.corner 1) {};
  \end{scope}
\end{tikzpicture}
\end{center}
\end{minipage} 
\begin{minipage}[r]{.3\textwidth}
\begin{center}
\begin{tikzpicture}
 \draw[thick] (xyz polar cs:angle=30,radius=0.77) circle (0.666cm);
\draw[thick] (xyz polar cs:angle=150,radius=0.77) circle (0.666cm);
\draw[thick] (xyz polar cs:angle=270,radius=0.77) circle (0.666cm);
\draw[thick] (0,0) circle (0.10333cm);
\draw[thick] (xyz polar cs:angle=30,radius=0.77) circle (0.666cm);
\draw[thick] (xyz polar cs:angle=150,radius=0.77) circle (0.666cm);
\draw[thick] (xyz polar cs:angle=90,radius=1.11467) circle (0.322cm);
\draw[thick] (xyz polar cs:angle=210,radius=1.11467) circle (0.322cm);
\draw[thick] (xyz polar cs:angle=330,radius=1.11467) circle (0.322cm);
\end{tikzpicture}
\end{center}
\end{minipage} 
\caption{A partition into sets of equal measure and 
a partition into disks (only large disks visible).}
\end{figure}

This question turns out to have some relevance in the calculus of variations, in particular in the study of 
vibrations of a membrane $\Omega \subset \mathbb{R}^2$ as well as in spectral partition problems: given an
eigenfunction $\phi$ of the Laplacian $-\Delta$ with Dirichlet boundary conditions on $\Omega$, what is the maximal number of
connected components of $\Omega \setminus \left\{x \in \Omega: \phi(x) = 0\right\}$? Our quantitative study of this simple 
geometric principle in terms of Fraenkel asymmetry and size is very much motivated by the applicability to nodal domain 
estimates -- it could be of interest to capture the same phenomenon in other geometrically natural quantities.

\subsection{Geometric notions.} Let $n \geq 2$. Consider an open, bounded domain $\Omega \subset \mathbb{R}^n$ with a given decomposition
$$ \Omega = \bigcup_{i=1}^{N}{\Omega_i}.$$
We require two quantities to measure
\begin{enumerate}
 \item the deviation of $\Omega_i$ from a ball
 \item the deviation of $|\Omega_i|$ from
$$ \min_{1 \leq j \leq N}{|\Omega_j|}.$$
\end{enumerate}
In measuring how much a set deviates from a ball, Fraenkel asymmetry has recently become an increasingly central 
notion (i.e. \cite{fus2}): given a domain $\Omega \subset \mathbb{R}^n$, its Fraenkel asymmetry is defined via
$$ \mathcal{A}(\Omega) := \inf_{B}{\frac{| \Omega \triangle B|}{|\Omega|}},$$
where the infimum ranges over all disks $B \subset \mathbb{R}^n$ with $|B| = |\Omega|$ and $\triangle$ is the symmetric difference
$$\Omega \triangle B = (\Omega \setminus B) \cup (B \setminus \Omega).$$
Fraenkel asymmetry is scale-invariant 
$$ 0 \leq \mathcal{A}(\Omega) \leq 2.$$
As for deviation in size, we define the deviation from the smallest element in the partition via
$$ D(\Omega_i) := \frac{|\Omega_i| - \min_{1 \leq j \leq N}{|\Omega_j|}}{|\Omega_i|},$$
which is scale invariant as well and satisfies
$$ 0 \leq D(\Omega_i) \leq 1.$$

\subsection{Main result.} Our main result states that for partitions of $\Omega$ into a large number of sets, an \textit{average} element of
the partition needs to have either its Fraenkel asymmetry $\mathcal{A}(\Omega_i)$ or its deviation from the smallest element $D(\Omega_i)$ bounded
away from 0 by a \textit{universal} constant. This statement obviously fails if we only pick one of the two terms: any set can be decomposed
into $N$ sets of measure $|\Omega|/N$ each or each set can be decomposed into disks of different radii with an arbitrarily small measure of different shape
(packings of Apollonian type).

\begin{thm} Suppose $\Omega \subset \mathbb{R}^n$ is an open and bounded domain and 
$$ \Omega = \bigcup_{i=1}^{N}{\Omega_i}$$
with measurable sets $\Omega_i$ satisfying
$$ \Omega_i \cap \Omega_j = \emptyset \qquad \mbox{for} \quad i \neq j.$$
There exists a universal constant $c_n > 0$ depending only on the dimension and a constant $N_0 \in \mathbb{N}$ depending only on $\Omega$ such that for $N \geq N_0$
$$ \left(\sum_{i=1}^{N}{\frac{|\Omega_i|}{|\Omega|}\mathcal{A}(\Omega_i)}\right)+\left(\sum_{i=1}^{N}{\frac{|\Omega_i|}{|\Omega|}D(\Omega_i)}\right) \geq c_n.$$
In particular,
$$c_2 \geq \frac{1}{60000}.$$
\end{thm}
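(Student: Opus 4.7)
The plan is to argue by contradiction. Suppose
$$A := \sum_i \tfrac{|\Omega_i|}{|\Omega|}\mathcal{A}(\Omega_i), \qquad D := \sum_i \tfrac{|\Omega_i|}{|\Omega|}D(\Omega_i)$$
satisfy $A + D < c_n$, and seek to contradict the classical packing-density bound for congruent balls. Two Markov-type filterings isolate the \emph{good} pieces. Setting $m := \min_j |\Omega_j|$, one computes $D = 1 - Nm/|\Omega|$, so pieces with $|\Omega_i| > (1+\varepsilon) m$ have total measure at most $c_n(1+\varepsilon)\varepsilon^{-1}|\Omega|$, while pieces with $\mathcal{A}(\Omega_i) > \delta$ have total measure at most $(c_n/\delta)|\Omega|$. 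The remaining set $G$ of \emph{good} indices therefore covers a fraction $1 - O(c_n/\delta + c_n/\varepsilon)$ of $\Omega$, where $\delta,\varepsilon$ are parameters to be chosen.

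For each $i \in G$, choose an optimal ball $B_i$ realizing the Fraenkel infimum, so $|B_i| = |\Omega_i|$, $|B_i \triangle \Omega_i| \le \delta|\Omega_i|$, and the radius $r_i$ lies in $[r, r(1+\varepsilon)^{1/n}]$, where $r$ is the radius of a ball of volume $m$. These are \emph{almost} disjoint: the disjointness of the $\Omega_i$ forces
$$B_i \cap B_j \subset (B_i \setminus \Omega_i) \cup (B_j \setminus \Omega_j), \qquad |B_i \cap B_j| \le (1+\varepsilon)\,\delta m.$$
The leading asymptotics of a spherical lens, $\sim r^{(n-1)/2}s^{(n+1)/2}$ in the overlap parameter $s = r_i + r_j - |c_i - c_j|$, then force $|c_i - c_j| \ge r_i + r_j - C_n \delta^{2/(n+1)} r$. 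Concentrically shrinking each $B_i$ by $\tfrac12 C_n \delta^{2/(n+1)} r$ produces a truly disjoint family $\{B_i'\}_{i \in G}$ with $|B_i'| \ge (1 - O(\delta^{2/(n+1)}))|B_i|$.

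The shrunk balls lie in the Minkowski enlargement $\Omega' := \Omega + B(0, Cr)$, which satisfies $|\Omega'| \le (1 + o(1))|\Omega|$ once $N$ is large enough that $r$ is small on the Minkowski-content scale of $\partial\Omega$; this is the source of the $N_0(\Omega)$ dependence. Each $B_i'$ contains a concentric ball of radius $\ge r(1 - O(\delta^{2/(n+1)}))$, so the classical equal-ball packing density $\eta_n$ (Thue--Fejes T\'oth for $n = 2$ gives $\eta_2 = \pi/\sqrt{12}$) yields
$$\sum_{i \in G} |B_i'| \le (1 + O(\varepsilon))\,\eta_n\, |\Omega'|.$$
Combining with the lower bound $\sum_{i \in G} |B_i'| \ge (1 - O(c_n/\delta + c_n/\varepsilon + \delta^{2/(n+1)}))|\Omega|$ and making a moderately small choice (e.g.\ $\delta = 10^{-3}$, $\varepsilon = 10^{-2}$ in $n=2$) produces a contradiction as soon as $c_n$ is a small multiple of the packing deficit $1 - \eta_n$; in $n=2$ this deficit is roughly $0.093$, comfortably allowing $c_2 = 1/60000$.

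The main obstacle is the quantitative shrinking step: converting the weak measure-theoretic overlap bound $|B_i \cap B_j| \lesssim \delta m$ into an additive separation of centers with the sharp exponent $2/(n+1)$ demands the precise asymptotics of spherical lens volume, and the final numerical constant emerges from a three-way balance between this shrinking loss, the two Markov filtering losses, and the packing-density deficit $1 - \eta_n$.
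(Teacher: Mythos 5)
Your proof is essentially correct and follows the same high-level blueprint as the paper: argue by contradiction, Markov-filter out the oversize pieces using the identity $D = 1 - Nm/|\Omega|$, replace the remaining $\Omega_i$ by their Fraenkel balls, bound the pairwise overlap of those balls via the lens volume, shrink to obtain a disjoint packing, and contradict the packing-density bound ($\pi/\sqrt{12}$ for disks, via Blind for the mildly unequal radii). Both proofs stand or fall on exactly the same computation — the lens volume $\sim r^{(n-1)/2}s^{(n+1)/2}$ feeding into a shrink-by-$O(\delta^{2/(n+1)})$ — and both invoke the same packing input.

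Where you genuinely diverge is in how the Fraenkel-asymmetry budget is spent, and this is a real simplification. You run a second Markov filter directly on $\mathcal{A}(\Omega_i)$, retaining only indices with $\mathcal{A}(\Omega_i)\le\delta$, and then derive weak overlap of the retained balls from the disjointness $\Omega_i\cap\Omega_j=\emptyset$ via $B_i\cap B_j\subset(B_i\setminus\Omega_i)\cup(B_j\setminus\Omega_j)$. The paper instead leaves $\mathcal{A}$ unconstrained on individual pieces and filters on the \emph{pair} event ``$B_i$ and $B_j$ overlap strongly,'' using the same inclusion only to lower-bound the asymmetry contribution of such a pair. Because of this, the paper worries that deleting the bad balls could create holes whose shape makes the surviving local packing density artificially close to $1$, and resolves it by deleting full $2\eta_0$-neighbourhoods of the bad sets — the source of the factors $9$ and $128\pi/37$ in its final numerical bound. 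Your version avoids this entirely: your lower bound $\sum_{i\in G}|B_i'|\gtrsim(1-O(\cdot))|\Omega|$ is measure-theoretic ($|B_i|=|\Omega_i|$, the discarded $\Omega_i$ have small total measure), and your upper bound is the packing density in the fixed slightly enlarged domain $\Omega'$, so no holey-region pathology arises. Your filter on $\mathcal{A}$ also has the incidental benefit of guaranteeing that every retained Fraenkel ball actually meets $\Omega$ (since $\mathcal{A}(\Omega_i)<2$ forces $B_i\cap\Omega_i\ne\emptyset$), hence $B_i\subset\Omega+B(0,2r_i)$ — a point the paper treats only implicitly under its ``ignore boundary effects'' convention. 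The one place you are lighter than the paper is constant bookkeeping: the paper optimizes over $c_1,c_2$ to certify $1/60000$ explicitly, while you leave it at ``a small multiple of the packing deficit,'' which would require the same arithmetic to pin down.

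Two small technical remarks. The overlap bound $|B_i\cap B_j|\le(1+\varepsilon)\delta m$ should carry a factor $2$ (one contribution from each of $|B_i\setminus\Omega_i|$ and $|B_j\setminus\Omega_j|$), though this only shifts constants. And in applying Blind's result one should confirm that the shrunk radii still satisfy the $4/3$-ratio hypothesis, which they do for $\varepsilon$ and $\delta$ small; in dimension $n\ge3$ the analogous bounded-ratio packing bound $\eta_n<1$ is what both arguments lean on without citation.
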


\textbf{Remarks.}
\begin{itemize}
 \item  Taking $\Omega$ to be the union of a finite number of disjoint balls of equal radius shows that such a statement can only hold for $N$ sufficiently large depending on $\Omega$.
 \item There are no assumptions whatsoever on the shape of $\Omega_j$ -- they need not be connected.
\item Fraenkel asymmetry turns the problem into a non-local one as the 'missing' measure $\Omega \triangle B$ can be arbitrarily spread over the plane: this
is why we believe that any argument yielding a substantially improved constant will need to be based on significantly new ideas. Indeed, our proof will essentially
only be a 'non-local perturbation' of a local argument but not truly non-local itself (hence the small constant).
\item What can be said about the optimal constant $c_n$? A natural candidate for an extremizer in $\mathbb{R}^2$ is the hexagonal tiling, 
which suggests that maybe
$$ c_2 \sim 0.074465754\dots$$
As packing density of spheres decreases in higher dimensions, we consider it extremely natural to conjecture that $$c_2 \leq c_3 \leq \dots$$ 
\item The following interesting question is due to Almut Burchard: suppose the hexagonal packing was indeed a minimizer; we can introduce a parameter
$\alpha > 0$ and look for minimizers of
$$ \alpha\left(\sum_{i=1}^{N}{\frac{|\Omega_i|}{|\Omega|}\mathcal{A}(\Omega_i)}\right)+\left(\sum_{i=1}^{N}{\frac{|\Omega_i|}{|\Omega|}D(\Omega_i)}\right).$$
It seems reasonable to conjecture that the hexagonal packing will then be a minimizer for every $0 < \alpha \leq 1$. However, it is easy to see that
there will be some $\alpha_0 \geq 1$ such that the hexagonal packing is no longer minimizing for any $\alpha > \alpha_0$. What happens at the transition?
Which configurations minimize the expression then?
\end{itemize}

\subsection{Variants and extensions.} There are many possible variations and extensions. We can write Fraenkel asymmetry as
$$ \mathcal{A}(\Omega) = \inf_{x \in \mathbb{R}^n}{\frac{| \Omega \triangle (B+x)|}{|\Omega|}},$$ 
where $B$ is the ball centered at the origin scaled in such a way that $|B| = |\Omega|$. However, this definition can be easily generalized by
considering other sets $K$ instead of the ball if one corrects for the arising lack of rotational symmetry, i.e.
$$ \mathcal{A}_{K}(\Omega) := \inf_{x \in \mathbb{R}^n}{\inf_{R \in \mathcal{R}}{\frac{| \Omega \triangle (RK+x)|}{|\Omega|}}},$$ 
where $K$ is scaled in such a way that $|K| = |\Omega|$ and $\mathcal{R}$ is the set of all rotations. The proof of our main statement is quite robust: it immediately
allows to prove the following variant.

\begin{thm} Let $K \subset \mathbb{R}^n$ be a bounded, convex set with a smooth boundary containing no line segment. Then there exists 
a constant $c(K) > 0$  such that for any open, bounded $\Omega \subset \mathbb{R}^n$ and any decomposition
$$ \Omega = \bigcup_{i=1}^{N}{\Omega_i}$$
with measurable sets $\Omega_i$ satisfying
$$ \Omega_i \cap \Omega_j = \emptyset \qquad \mbox{for} \quad i \neq j$$
and $N$ sufficiently large, there is a geometric uncertainty principle
$$ \left(\sum_{i=1}^{N}{\frac{|\Omega_i|}{|\Omega|}\mathcal{A}_{K}(\Omega_i)}\right)+\left(\sum_{i=1}^{N}{\frac{|\Omega_i|}{|\Omega|}D(\Omega_i)}\right) \geq c(K).$$
\end{thm}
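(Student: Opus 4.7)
The plan is to mirror the proof of Theorem~1 while carefully isolating the two places where the ball was used. The first is purely definitional: a set with small Fraenkel asymmetry $\mathcal{A}(\Omega_i)$ is, by definition, close in symmetric difference to a ball of equal volume. The second is geometric: balls of a fixed radius cannot pack $\mathbb{R}^n$ at density arbitrarily close to $1$. The first point is automatic in the $K$-version by the very definition of $\mathcal{A}_K$, so the content of Theorem~2 lies in establishing the second point with rigid motions of $K$ in place of balls.

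Assuming the left-hand side is at most some $\eta = \eta(K)$, I would apply Markov's inequality twice to the two weighted averages in order to extract a subfamily $\mathcal{I} \subset \{1, \dots, N\}$ carrying mass $\sum_{i \in \mathcal{I}} |\Omega_i| \geq (1 - 2\sqrt{\eta})|\Omega|$ on which simultaneously $\mathcal{A}_K(\Omega_i) \leq \sqrt{\eta}$ and $D(\Omega_i) \leq \sqrt{\eta}$. For $i \in \mathcal{I}$ this means that $\Omega_i$ agrees with some rigid copy $R_i K + x_i$ of $K$ up to symmetric difference $\sqrt{\eta}\,|\Omega_i|$, and that the scales $|\Omega_i|^{1/n}$ differ from the common minimum by a factor $1 + O(\sqrt{\eta})$. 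Rescaling each such rigid copy to the common volume $\min_j |\Omega_j|$ produces a family of congruent rigid copies of $K$ which, up to total volume and total pairwise overlap of order $\sqrt{\eta}\,|\Omega|$, tiles a substantial portion of $\Omega$.

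The main obstacle is the $K$-analogue of the disk-packing bound: one needs a constant $\delta(K) > 0$ such that no family of pairwise disjoint congruent rigid copies of $K$ in $\mathbb{R}^n$ has upper density larger than $1 - \delta(K)$. Here the hypotheses on $K$ enter essentially. Smoothness of $\partial K$ forces any two touching copies to share a common tangent hyperplane at each contact point; strict convexity (equivalent, under the smoothness hypothesis, to the absence of line segments in $\partial K$) then forces both copies to peel away from that hyperplane at a definite quadratic rate, opening a wedge-shaped gap near every contact. Uniform bounds on the second fundamental form of $\partial K$ quantify this local deficit, and a compactness argument on pointed configurations upgrades it to a single universal constant $\delta(K) > 0$.

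With this density deficit in hand the proof closes as in Theorem~1: the near-tiling constructed in the previous step realizes a packing of $\Omega$ at density at least $1 - O(\sqrt{\eta})$, which contradicts $1 - \delta(K)$ once $\eta \ll \delta(K)^2$. The hypothesis $N \geq N_0(\Omega)$ is used, exactly as in Theorem~1, to guarantee that $\Omega$ is large relative to the common scale of the $\Omega_i$, so that the bulk density comparison is not swallowed by boundary effects of $\partial \Omega$. The resulting constant $c(K)$ depends qualitatively on $\delta(K)$ and the losses from the two Markov steps; unlike Theorem~1, one would not expect a meaningful explicit value.
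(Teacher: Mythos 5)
Your proposal follows the same overall strategy as the paper: reduce to a packing-density deficit for rigid copies of $K$, which holds because a smooth boundary containing no line segment precludes tiling. Both arguments then close by continuity (letting the assumed upper bound tend to zero forces the density of a disjoint packing to exceed $1-\delta(K)$). Where your route differs is in the reduction: you apply Markov's inequality directly to the two weighted averages to extract one "good" subfamily in which both $\mathcal A_K(\Omega_i)$ and $D(\Omega_i)$ are small, and then observe that on this subfamily the pairwise overlaps $|K_i \cap K_j|$ are automatically small (since $\Omega_i \cap \Omega_j = \emptyset$ forces $|K_i\cap K_j| \le |\Omega_i|\mathcal A_K(\Omega_i) + |\Omega_j|\mathcal A_K(\Omega_j)$). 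The paper instead runs two separate lemmas — one bounding the total measure of "big" elements (those with $D(\Omega_i)$ above a threshold $c_1$), one bounding the measure of "strongly overlapping" pairs (those with $|K_i \cap K_j| \ge c_2\eta_0^n$) — and removes fixed-size neighbourhoods of both. Your Markov-based extraction is arguably cleaner, and in particular you compute the final density with respect to all of $\Omega$, which sidesteps the subtlety the paper worries about ("removing elements from a set does not increase its packing density") and which motivates its neighbourhood removals.

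One genuine imprecision: you write that the near-tiling "realizes a packing of $\Omega$," but after rescaling you still have overlapping copies, not a packing. You need the intermediate step — present in the paper as the constant $c_7$ — of shrinking each copy by a factor $1-\rho$ about its centroid to kill the residual overlaps, with $\rho \to 0$ as the overlap measure $\to 0$ (this uses compactness of the configuration space of touching pairs but, unlike the density deficit, does not require the no-line-segment hypothesis). After the shrinkage the density becomes $(1-\rho)^n\bigl(1-O(\sqrt\eta)\bigr)$ rather than $1-O(\sqrt\eta)$, so the final smallness condition on $\eta$ depends on the modulus $\rho(\cdot)$ of $K$, not simply on $\delta(K)^2$. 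Also, smoothness plus absence of line segments does not force the second fundamental form to be bounded away from zero (flat points of higher order are allowed), so the "definite quadratic rate" should be weakened to a nonvanishing rate; the density deficit $\delta(K)>0$ survives, but the quantitative shape of the gap at a contact point depends on $K$. Finally, both you and the paper state the key packing-density deficit for smooth strictly convex bodies as a known fact rather than proving it, so you are on equal footing there.
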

This is certainly not the most general form of the theorem. Let $\mathcal{S}$ be the set of bounded
sets in $\mathbb{R}^n$ such that $\mathbb{R}^n$ can be partitioned into translations and rotations of $\mathcal{S}$. 
Suppose $K$ is a bounded set satisfying
$$ \inf_{S \in \mathcal{S}}{\mathcal{A}_{S}(K)} > \varepsilon$$
for some $\varepsilon > 0$. Does this already imply a geometric uncertainty principle for $\mathcal{A}_K$ with a constant depending only on $\varepsilon$?

\section{Application to spectral problems}

\subsection{Introduction.}
Consider an open, bounded domain $\Omega \subset \mathbb{R}^2$. The Laplacian operator with Dirichlet conditions
gives rise to a sequence of eigenvalues $(\lambda_n)_{n \in \mathbb{N}}$ and associated eigenfunctions 
$(\phi_n)_{n \in \mathbb{N}}$, where
\begin{align*}
 -\Delta \phi_n &= \lambda_n \phi_n \quad \mbox{in}~\Omega \\
\phi_n &= 0 \qquad \mbox{on}~\partial\Omega.
\end{align*}
Laplacian eigenfunctions are of great intrinsic interest and have been extensively studied. One
natural question is to find bounds on the number of connected components of
$$ \Omega \setminus \left\{x \in \Omega: \phi_n(x) = 0\right\}.$$
Let us denote this quantity by $N(\phi_n)$. There are no nontrivial lower bounds on $N(\phi_n)$ in general. Denoting the smallest positive zero of the
Bessel function by $j \sim 2.40\dots$, the known upper bounds are as follows
\begin{align*}
N(\phi_n) &\leq n \quad  &&\mbox{(Courant, 1924)} \\
\limsup_{n \rightarrow \infty}{\frac{N(\phi_n)}{n}} &\leq  \left(\frac{2}{j}\right)^2    &&\mbox{(Pleijel, 1956)} \\
\limsup_{n \rightarrow \infty}{\frac{N(\phi_n)}{n}} &\leq  \left(\frac{2}{j}\right)^2 - 3 \cdot 10^{-9}    &&\mbox{(Bourgain, 2013)},
\end{align*}
where $(2/j)^2 \leq 7/10$. Polterovich \cite{polt} suggests that the optimal constant might be $2/\pi \sim 0.63$ with equality for a rectangle
(this example has also been noted B\'{e}rard \cite{ber} and probably others). It seems natural to assume that a domain $\Omega \subset \mathbb{R}^2$ giving rise to a 
large number of nodal domains needs to have a completely integrable geodesic flow. Some numerical experiments in
this direction have been carried out by Blum, Gnutzmann \& Smilansky \cite{blum}.

\subsection{Pleijel's argument.} Pleijel's argument \cite{plei} is short and simple. Suppose the eigenfunction $\phi_n$ induces a partition
$$ \Omega = \bigcup_{i=1}^{N}{\Omega_i}.$$
Then, by the Faber-Krahn inequality,
$$ \lambda_n(\Omega) \geq \lambda_1(\Omega_i) \geq \lambda_1(B),$$
where $B$ is the disk satisfying $|B| = |\Omega_i|$. However, $\lambda_1(B)$ can be explicitely computed and the inequality
then implies a lower bound on $|B|$. Combining this with Weyl's law $\lambda_n \sim 4\pi n/ |\Omega|$, yields the result.
Of course, this argument is only sharp if we have a decomposition of $\Omega$ into disks of equal radius.

\subsection{Bourgain's argument.} Bourgain \cite{bour} employs a spectral stability estimate due to Hansen \& Nadirashvili, which is 
formulated in terms of the inradius of a domain: for a nonempty, bounded domain $\Omega \subset \mathbb{R}^2$, we have
$$ \lambda_1(\Omega) \geq \left[1 + \frac{1}{250}\left(1-\frac{r_i(\Omega)}{r_o(\Omega)}\right)^3\right]\lambda_1(\Omega_0),$$
where $\Omega_0$ is the ball with $|\Omega_0| = |\Omega|$, $r_0(\Omega)$ is the radius of $\Omega_0$ and $r_i$ the
inradius of $\Omega$. The second ingredient is a packing result due to Blind \cite{blind}: the packing density of
a collection of disks in the plane with radii $a_1, a_2, \dots$ satisfying $a_i \geq (3/4) a_j$ for all $i,j$ is bounded 
from above by $\pi/\sqrt{12}.$ These two results imply the improvement.

\subsection{An improved Pleijel estimate.} Exploiting stability estimates for the Faber-Krahn inequality in terms of Fraenkel asymmetry,
we are able to prove the following result.

\begin{corollary}
 There exists a constant $\varepsilon_0 > 0$ such that
$$ \limsup_{n \rightarrow \infty}{\frac{N(\phi_n)}{n}} \leq \left(\frac{2}{j}\right)^2 - \varepsilon_0.$$
\end{corollary}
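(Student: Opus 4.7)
The plan is to combine a quantitative Faber--Krahn inequality with the Theorem and Weyl's law. The sharp quantitative Faber--Krahn inequality (Brasco--De~Philippis--Velichkov, with earlier versions of Nadirashvili, Melas, or Fusco--Maggi--Pratelli) furnishes a universal constant $c>0$ such that for every nonempty, open $U\subset\RR^2$,
$$\lambda_1(U)|U|\ \geq\ \pi j^2\bigl(1+c\,\mathcal{A}(U)^2\bigr).$$
Let $N=N(\phi_n)$ and let $\Omega=\bigcup_{i=1}^N\Omega_i$ be the nodal decomposition of $\phi_n$. Since $\phi_n|_{\Omega_i}$ has a definite sign, $\lambda_1(\Omega_i)=\lambda_n$. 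Writing $\alpha_i=\mathcal{A}(\Omega_i)$, $d_i=D(\Omega_i)$, $w_i=|\Omega_i|/|\Omega|$, $\bar A=\sum_i w_i\alpha_i$, $\bar D=\sum_i w_i d_i$, and $m=\min_i|\Omega_i|$, the input on each nodal piece is
$$|\Omega_i|\lambda_n\ \geq\ \pi j^2(1+c\alpha_i^2)\qquad(\star).$$

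From $(\star)$ I would extract two asymptotic upper bounds on $N$. First, applying $(\star)$ to the smallest element gives $m\lambda_n\geq\pi j^2$; combined with the identity $Nm=|\Omega|(1-\bar D)$ and Weyl's law $\lambda_n|\Omega|/(\pi j^2)=4n/j^2+o(n)$ this yields
$$\frac{N}{n}\ \leq\ \frac{4}{j^2}(1-\bar D)+o(1).\qquad(\mathrm{I})$$
For the second bound, set $M_i=|\Omega_i|\lambda_n/(\pi j^2)\geq 1+c\alpha_i^2$. The elementary inequality $1/(1+x)\leq 1-x/2$, valid on $[0,1]$ (which covers $x=c\alpha_i^2$ once $c\leq 1/4$, since $\alpha_i\leq 2$), gives
$$1\ \leq\ \frac{M_i}{1+c\alpha_i^2}\ \leq\ M_i\Bigl(1-\tfrac{c}{2}\alpha_i^2\Bigr).$$
Summing and using $\sum_iM_i=\lambda_n|\Omega|/(\pi j^2)$, $\sum_iM_i\alpha_i^2=(\lambda_n|\Omega|/(\pi j^2))\sum_iw_i\alpha_i^2$, and Jensen's inequality $\sum_iw_i\alpha_i^2\geq\bar A^2$, one obtains
$$\frac{N}{n}\ \leq\ \frac{4}{j^2}\Bigl(1-\tfrac{c}{2}\bar A^2\Bigr)+o(1).\qquad(\mathrm{II})$$

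If $\limsup_n N(\phi_n)/n>0$, then $N(\phi_n)\to\infty$ along any maximizing subsequence, so for large $n$ the Theorem applies to the nodal decomposition and yields $\bar A+\bar D\geq c_2$. Either $\bar D\geq c_2/2$, in which case (I) improves Pleijel by $2c_2/j^2$, or $\bar A\geq c_2/2$, in which case (II) combined with $\bar A^2\geq c_2^2/4$ improves Pleijel by $c\,c_2^2/(2j^2)$. Taking the $\limsup$ establishes the corollary with
$$\varepsilon_0\ =\ \frac{4}{j^2}\min\!\left(\frac{c_2}{2},\ \frac{c\,c_2^2}{8}\right)>0.$$

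The conceptual obstacle is bound (II). The uncertainty principle controls the \emph{first} moment $\bar A$, while quantitative Faber--Krahn only improves $\lambda_1$ by the \emph{second} moment quantity $c\alpha_i^2$ on each piece; summing $(\star)$ directly produces the unweighted sum $\sum_i\alpha_i^2$, which $\bar A$ does not control. The trick is to sum the \emph{reciprocal} form $1\leq M_i/(1+c\alpha_i^2)$ rather than $(\star)$ itself: after elementary expansion this naturally supplies the \emph{weighted} sum $\sum_iw_i\alpha_i^2$, which Jensen's inequality bounds below by $\bar A^2$, converting a linear UP-input into a quadratic Pleijel-improvement.
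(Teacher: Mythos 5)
Your proposal is correct and follows the same high-level strategy as the paper: combine Theorem~1 with the Brasco--De~Philippis--Velichkov quantitative Faber--Krahn inequality, Weyl's law, and a case split according to whether $\bar A$ or $\bar D$ carries the uncertainty-principle mass. The $\bar D$-large case is handled identically (the identity $Nm=|\Omega|(1-\bar D)$ is exactly what the paper computes). The genuine difference is in how the $\bar A$-large case is converted into a cap on $N$. The paper proves an auxiliary level-set lemma --- a positive fraction of $|\Omega|$ must be occupied by domains with $\mathcal{A}(\Omega_i)\geq c/6$ --- and then counts nodal domains in the two level sets separately, applying the stability estimate only on the high-asymmetry level set. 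You instead sum the reciprocal form
$$1\ \leq\ \frac{M_i}{1+c\alpha_i^2}\ \leq\ M_i\Bigl(1-\tfrac{c}{2}\alpha_i^2\Bigr)$$
over all $i$ at once and apply Jensen to $\sum_i w_i\alpha_i^2\geq\bar A^2$. This is cleaner: it avoids the level-set dichotomy and resolves in one line the mismatch between the first-moment control $\bar A$ furnished by the uncertainty principle and the second-moment quantity $\alpha_i^2$ entering the Faber--Krahn stability estimate, which you correctly identify as the crux. Both routes yield an improvement quadratic in $c_2$ (your $\varepsilon_0=\frac{4}{j^2}\min(\frac{c_2}{2},\frac{cc_2^2}{8})$, the paper's $\frac{c^3C}{216+6c^2C}$-type expression), and both inherit the same non-explicitness since the constant $c$ in quantitative Faber--Krahn is not explicit in the literature --- which is precisely why the paper leaves $\varepsilon_0$ unspecified. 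One minor care point you implicitly handle but should state: if $N(\phi_n)<N_0$ infinitely often, the bound $N/n\to 0$ on that subsequence makes the corollary trivial there, so the Theorem's hypothesis $N\geq N_0$ is harmless.
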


An explicit value for $\varepsilon_0$ would follow from an explicit constant in a Faber-Krahn stability result
involving Fraenkel asymmetry (these constants are known to exist but have not yet been determined explicitely). 
Given the general interest in this question, we are confident that such a result will be eventually 
obtained. Much like Bourgain, however, we consider the underlying geometry more interesting 
than the actual numerical value -- particularly in light of the following obstruction.

\subsection{An obstruction.}  Take $\Omega = [0,1]^2$ of unit measure and cover it using the hexagonal
covering (with obvious modifications at the boundary). Numerical computations (e.g. \cite{jia}) give that
the first Laplacian eigenvalue of a hexagon $H$ satisfies
$$ \lambda_1(H) \sim \frac{18.5762}{|H|}.$$
The Weyl law gives
$$ \lambda_n(\Omega) \sim 4\pi n.$$
We can place $N$ hexagons of size $|H|$ in $\Omega$, where
$$ N |H| = 1.$$
Since we need to have $\lambda_n(\Omega) \geq \lambda_1(H)$, this implies
$$ 4\pi n \sim \frac{18.5762}{|H|}$$
and thus
$$ N = \frac{1}{|H|} \sim \frac{4\pi}{18.5762} n \sim 0.676\dots n.$$
As a consequence, any type of argument that leads to an improved Pleijel inequality with a constant smaller than $0.67\dots$ will
need to employ completely different arguments: the arguments given by Pleijel, Bourgain and this paper argue based on
the \textit{assumption} that a partition of $\Omega$ into nodal domains is given. However, such a partition could
very well be the hexagonal partition. Arguments leading to a better constant than $0.676\dots$ will need 
to \textit{explain} why, say, an eigenfunction on a domain will not have eigenfunctions corresponding to a partition 
into hexagons.

\subsection{Spectral minimal partitions}
The problem of spectral minimal partitions is as follows: given a smooth, bounded domain $\Omega \subset \mathbb{R}^n$ and
an integer $k \in \mathbb{N}$, find among all partitions of $\Omega$ into $k$ disjoint domains
$$ \Omega = \bigcup_{i=1}^{k}{\Omega_i}$$
the one minimizing
$$ \max_{1 \leq i \leq k}{\lambda_1(\Omega_i)}.$$
It is conjectured that in two dimensions the minimal partitions should asymptotically behave like hexagonal tilings
(with the exception of the boundary, which becomes neglible as $k \rightarrow \infty$). We refer to Caffarelli \& Lin \cite{caf}, 
Helffer \& Hoffmann-Ostenhof \& Terracini \cite{heli} and a survey of Helffer \cite{helffer}. One basic inequality 
\cite[Proposition 6.1]{heli} following immediately from Pleijel's estimate is that
$$ \max_{1 \leq i \leq k}{\lambda_1(\Omega_i)} \geq k\frac{\pi j^2}{|\Omega|}.$$
Bourgain remarks that his argument also allows to slightly improve the constant in this inequality. As
a second quantity that is sometimes minimized (see e.g. Caffarelli \& Lin \cite{caf} or B\'{e}rard \& Helffer \cite{berr}),
one can consider the average and establish a strenghtened Pleijel-type estimate 
$$ \max_{1 \leq i \leq k}{\lambda_1(\Omega_i)} \geq \frac{1}{k}\sum_{i=1}^{k}{\lambda_1(\Omega_i)} \geq  k\frac{\pi j^2}{|\Omega|}.$$
This inequality, too, can be strenghtened.
\begin{corollary}
There exists a $\varepsilon_0 > 0$ such that for any smooth, bounded domain $\Omega \subset \mathbb{R}^2$ and all $k$ sufficiently large 
(depending on $\Omega$)
$$\frac{1}{k}\sum_{i=1}^{k}{\lambda_1(\Omega_i)} \geq  (\pi j^2+\varepsilon_0)\frac{k}{|\Omega|}.$$
\end{corollary}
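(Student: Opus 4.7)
The plan is to combine a quantitative Faber--Krahn stability estimate, phrased in terms of Fraenkel asymmetry, with the main uncertainty principle. Write $a_i = |\Omega_i|/|\Omega|$ and $\mathcal{A}_i = \mathcal{A}(\Omega_i)$. The first ingredient I would invoke is a quantitative Faber--Krahn inequality of the form
$$ \lambda_1(\Omega_i) \;\geq\; \frac{\pi j^2}{|\Omega_i|}\bigl(1 + F(\mathcal{A}_i)\bigr), $$
where $F\geq 0$ is bounded on $[0,2]$ and satisfies $F(t)\geq c\,t^2$. This upgrades each term in Pleijel's bound by an asymmetry factor, so that the whole game becomes to argue that on average this factor is not too small.

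Next I would apply Cauchy--Schwarz in the form
$$ \sum_{i=1}^{k}\lambda_1(\Omega_i)\cdot\sum_{i=1}^{k}\frac{|\Omega_i|}{\pi j^2(1+F(\mathcal{A}_i))} \;\geq\; k^2, $$
and use the elementary convex inequality $1/(1+F(\mathcal{A}_i))\leq 1 - c_1 F(\mathcal{A}_i)$, valid on $[0,\sup F]$ for a suitable $c_1>0$, to bound the second factor by $|\Omega|(1 - c_1\sum a_i F(\mathcal{A}_i))/\pi j^2$. Inserting this and expanding $1/(1-x)\geq 1+x$ yields
$$ \frac{1}{k}\sum_{i=1}^{k}\lambda_1(\Omega_i) \;\geq\; \frac{\pi j^2 k}{|\Omega|}\Bigl(1 + c_1\sum_{i=1}^{k}a_i F(\mathcal{A}_i)\Bigr). $$
A further Cauchy--Schwarz with weights $a_i$, together with $F(t)\geq c t^2$, provides $\sum a_i F(\mathcal{A}_i)\geq c\,(\sum a_i\mathcal{A}_i)^2$.

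The final step is to insert the main theorem: for $k$ sufficiently large (depending on $\Omega$), $\sum a_i\mathcal{A}_i + \sum a_i D(\Omega_i)\geq c_2$. If $\sum a_i\mathcal{A}_i\geq c_2/2$, this delivers $\varepsilon_0 \geq c_1 c\,\pi j^2 c_2^2/4>0$ immediately. The main obstacle is the complementary regime in which $\sum a_i\mathcal{A}_i<c_2/2$ while $\sum a_i D(\Omega_i)\geq c_2/2$, i.e.\ the partition consists of many near-disks with substantial size deviation. The Faber--Krahn stability contribution is then small, and the naive AM--HM deficit $\sum 1/|\Omega_i| - k^2/|\Omega|$ is of order $kD^2$, which is too weak for large $k$. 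To close this case one must do a sub-split: when the $a_i$ are nearly uniform, writing $\sum a_i\mathcal{A}_i\leq a_{\max}\sum \mathcal{A}_i$ and using $a_{\max}\lesssim 1/k$ forces the counting sum $\sum_i \mathcal{A}_i^2$ to be of order $k$, which revives the stability term; when the $a_i$ are genuinely unbalanced, the AM--HM deficit is itself of order $k^2$ and supplies the improvement directly. Tuning the split threshold against $c_2$ and the stability constant $c$ produces the desired $\varepsilon_0>0$.
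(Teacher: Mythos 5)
Your forward direction — Cauchy--Schwarz to get $\frac{1}{k}\sum\lambda_1(\Omega_i)\geq\frac{\pi j^2 k}{|\Omega|}\bigl(1+c_1\sum a_iF(\mathcal{A}_i)\bigr)$ followed by $\sum a_iF(\mathcal{A}_i)\geq c\,(\sum a_i\mathcal{A}_i)^2$ — is correct and cleanly handles the case $\sum a_i\mathcal{A}_i\geq c_2/2$. You have also correctly identified the genuine difficulty: when $\sum a_i\mathcal{A}_i<c_2/2$ and $\sum a_iD(\Omega_i)\geq c_2/2$, the AM--HM deficit is not of order $k$ in general, because $\sum a_iD(\Omega_i)=1-k\min_j|\Omega_j|/|\Omega|$ can be driven up by a single very small cell while the rest are near-uniform, producing a Jensen gap of only $O(1/k)$ times the main term.

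The problem is that your proposed sub-split does not actually close this case. In the ``nearly uniform'' regime you write $\sum a_i\mathcal{A}_i\leq a_{\max}\sum\mathcal{A}_i$ and claim this ``forces $\sum_i\mathcal{A}_i^2$ to be of order $k$''; but with $a_{\max}\lesssim 1/k$ and $\sum a_i\mathcal{A}_i<c_2/2$, this gives only an upper bound $\sum\mathcal{A}_i\lesssim kc_2$, which constrains the asymmetries to be small on average, not large — it moves in the wrong direction and gives you nothing. The mechanism that actually disposes of this sub-case is the uncertainty principle itself re-applied to the \emph{good} sub-family: discard the few cells whose size is far from $|\Omega|/k$ (they have small total measure precisely because the Jensen gap is small), and observe that the surviving cells partition a set $\Omega'$ with $|\Omega'|\approx|\Omega|$ and have recomputed size-deviation sum as small as you like. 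Theorem 1 applied to this sub-partition then forces $\sum_{\mathrm{good}}a_i\mathcal{A}_i\gtrsim c_2$, which contradicts your case-2 hypothesis $\sum a_i\mathcal{A}_i<c_2/2$. In the complementary sub-case, the bad cells carry a definite fraction of the measure and the AM--HM deficit is genuinely of order $k$, as you say. This is essentially what the paper does, though argued contrapositively: it assumes the claimed bound fails, deduces via convexity that almost all cells have size $\approx|\Omega|/k$, concludes the size-deviation sum is small, invokes Theorem 1 to make the Fraenkel-asymmetry sum large, and then feeds this into the Brasco--De Philippis--Velichkov stability estimate exactly as in the improved Pleijel corollary. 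Your Cauchy--Schwarz packaging of the stability step is a tidy alternative to the paper's direct counting, but the bridge you need in the hard regime is Theorem 1 applied to the cleaned-up sub-partition, not a counting estimate on $\sum\mathcal{A}_i^2$.
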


\section{Proof of Theorem 1 in two dimensions}
This section contains a complete proof of the main statement in dimension $n=2$: the proof will track all arising constants. This takes up most 
of the text and contains all the ideas of this paper -- the argument is robust and the necessary (and rather easy) modifications to obtain the 
more general results will then be given in subsequent sections.

\subsection{Two possible strategies.} There seems to be a very natural way to prove the statement, however,
we did not manage to fully quantify all the steps and had to find another argument. We record our original 
idea nonetheless in the hope of building additional insight.\\

\textit{Sketch of an idea.} The inequality can be regarded as a probabilistic statement. Pick a random domain weighted 
according to size (i.e. choosing a random point of the domain, the probability of picking $\Omega_i$ is $|\Omega_i|/|\Omega|$). 
Our statement can be read as a lower bound on the expectation of the random variable
$$ \mathcal{A}(\Omega_i) + D(\Omega_i).$$
This motivates the following argument. Pick a random domain: either it already
has large Fraenkel asymmetry (in which case we are done) or it does not and behaves quite disk-like. In the second case,
we look at its neighbouring domains. If there are few adjacent domains, at least one of them touches along a
long arc of the boundary meaning that the neighbouring domain has large Fraenkel asymmetry (two disks touch in at
most one point). If there are many neighbours, either most are significantly smaller (making our randomly chosen domain
big in comparison and giving the statement) or some will need to get squeezed together because there is not enough room (creating
a large Fraenkel asymmetry). We believe that such a strategy, properly implemented, could give a relatively sharp constant -- 
however, making all these steps quantitative seems complicated.\\

\textit{Sketch of a different idea: our proof.} We chose a different approach of a more global nature: given a decomposition, we immediately switch to a collection
of $N$ disks by taking disks realizing the Fraenkel asymmetry for each partition. Then, we show that
\begin{itemize}
 \item there are few very large elements: the size of neighbourhood of the union of all disks whose size is bounded away from the smallest element in the partition
by a constant factor can be bounded from above.
\item ignoring the large sets (of which there are few), the Fraenkel balls of small sets usually do not overlap
too much; the exceptional set is small.
\end{itemize}
Removing all large disks and all overlapping disks, we may shrink the remaining disks such that no two of them overlap:
the resulting disk packing cannot have too high a density.

\subsection{Defining quantities.} The limes inferior in the statement guarantees that boundary effects coming
from $\partial \Omega$ become neglible and we will ignore the boundary throughout the proof (equivalently, we
could have phrased the statement for periodic partitions of $\mathbb{R}^2$).\\

We assume w.l.o.g. that $|\Omega| = 1$. For a point $x \in \mathbb{R}^2$ and a set $A \subset \mathbb{R}^2$,
we abbreviate
$$ \left\|x-A\right\| := \inf_{y \in A}{\|x-y\|}.$$
We introduce two numbers $c_1, c_2 > 0$ that will serve as threshold values for 'being big' and
'strong overlap' and we will keep them as variables throughout the proof, however, a minimization problem
towards the end of the proof will motivate us to set
$$c_1 = \frac{1}{250} \qquad \mbox{and} \qquad c_2 = \frac{7}{250}$$
and the reader can substitute these values throughout the proof if he wishes to. Their role is as follows: we call $\Omega_i$ 'big', if
\begin{align} \label{eq:c1}
 |\Omega_i| \geq (1+c_1)\min_{1 \leq j \leq N}{|\Omega_j|}.
\end{align}
The constant $c_2$ will serve as a measure of overlap: two disks with centers in $x, y \in \mathbb{R}^2$
and radii $r_1, r_2$ will be considered to have 'large' overlap if
\begin{equation} \label{eq:c2}
 |x - y| \leq (1-c_2)(r_1+r_2).
\end{equation}
We define a natural length scale $\eta_0$. \textit{Everything} in this problem and this proof is scale-invariant and, correspondingly, the actual size 
of $\eta_0$ is completely irrelevant throughout the proof: the variable cancels in the end. However, we consider it helpful to imagine a fixed length 
scale $\eta_0$ at which everything plays out and will phrase all arising quantities in terms of $\eta_0$, which we define via
\begin{equation} \label{eq:eta} \pi \eta_0^2 = \min_{1 \leq i \leq N}{|\Omega_i|}.\end{equation}
 The proof will be carried out via contradiction, we assume
$$ \left(\sum_{i=1}^{N}{\frac{|\Omega_i|}{|\Omega|}\mathcal{A}(\Omega_i)}\right)+\left(\sum_{i=1}^{N}{\frac{|\Omega_i|}{|\Omega|}D(\Omega_i)}\right) \leq c$$
for some small constant $c$ and show that this will lead to a contradiction if $c$ is small enough. It makes sense
to be slightly more careful and so we assume that for all $d_1, d_2 \geq 0$ with $c = d_1 + d_2$
\begin{equation}  \label{eq:assum1}
\left(\sum_{i=1}^{N}{\frac{|\Omega_i|}{|\Omega|}\mathcal{A}(\Omega_i)}\right) \leq d_1 
\end{equation}
and
\begin{equation} \label{eq:assum2}
\left(\sum_{i=1}^{N}{\frac{|\Omega_i|}{|\Omega|}\mathcal{D}(\Omega_i)}\right) \leq d_2. 
\end{equation}

We assign to each of the sets $\Omega_1, \dots, \Omega_n$ a disk $B_1, B_2, \dots, B_n$ such that $|B_i| = |\Omega_i|$ and
$$ \mathcal{A}(\Omega_i) = \frac{| \Omega_i \triangle B_i|}{|\Omega_i|}.$$
Note that a disk $B_i$ need not be uniquely determined by $\Omega_i$ (if there is more than one
 possible choice, we pick an arbitrary one and fix it for the rest of the proof). Each of these disks $B_i$
has a center $x_i$ and a radius $r_i \geq \eta_0$.\\

\subsection{The union of large sets has small measure.} Here we prove a simple statement: the measure
occupied by 'large' sets (in the sense of \eqref{eq:c1}) is small. Note that the statement is indeed
for the measure and not the number of large sets, which could be small. 

\begin{lemma} We have 
\begin{equation} \label{eq:bound1}
  \left|  \bigcup_{|\Omega_i| > (1+c_1)\pi \eta_0^2}^{}{\Omega_i} \right| \leq \frac{d_2}{c_1}+d_2.
\end{equation}
\end{lemma}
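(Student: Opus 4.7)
The plan is to convert the claim into a simple arithmetic bound by unpacking the definition of $D(\Omega_i)$ and applying the hypothesis \eqref{eq:assum2} directly. Since the $\Omega_i$ are pairwise disjoint, the measure of the union on the left-hand side equals the sum $\sum_{i \in L} |\Omega_i|$, where $L$ denotes the set of indices $i$ for which $\Omega_i$ is ``big'' in the sense of \eqref{eq:c1}. So the task reduces to proving
$$ \sum_{i \in L} |\Omega_i| \leq \frac{d_2}{c_1} + d_2.$$

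First I would observe that for each individual large index $i \in L$, condition \eqref{eq:c1} combined with the definition \eqref{eq:eta} of $\eta_0$ gives a lower bound on the deviation: rearranging $|\Omega_i| \geq (1+c_1)\pi\eta_0^2$ yields $\pi \eta_0^2 \leq |\Omega_i|/(1+c_1)$, hence
$$ |\Omega_i|\,D(\Omega_i) \;=\; |\Omega_i| - \pi\eta_0^2 \;\geq\; |\Omega_i|\Bigl(1 - \frac{1}{1+c_1}\Bigr) \;=\; \frac{c_1}{1+c_1}\,|\Omega_i|.$$
This is the only nontrivial step, and it is essentially a one-line computation.

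Summing this pointwise estimate over $i \in L$ and using disjointness together with the assumption \eqref{eq:assum2} (recall $|\Omega|=1$) gives
$$ \frac{c_1}{1+c_1}\sum_{i \in L} |\Omega_i| \;\leq\; \sum_{i \in L}|\Omega_i|\,D(\Omega_i) \;\leq\; \sum_{i=1}^{N}|\Omega_i|\,D(\Omega_i) \;\leq\; d_2,$$
where the second inequality only uses $D \geq 0$ on the remaining indices. Multiplying through by $(1+c_1)/c_1$ produces the claimed bound $d_2/c_1 + d_2$.

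The main (and really the only) obstacle is purely bookkeeping: one has to track that the comparison constant $c_1/(1+c_1)$ inverts to $(1+c_1)/c_1 = 1/c_1 + 1$, which gives the two-term form on the right-hand side of \eqref{eq:bound1}. No geometric input beyond the definitions of $D$ and $\eta_0$ is needed, and in particular the Fraenkel-asymmetry hypothesis \eqref{eq:assum1} plays no role in this lemma.
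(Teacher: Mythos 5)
Your proof is correct, and it is in fact cleaner and more direct than the paper's. The paper works by counting sets: it first derives a lower bound on the total number $N$ of pieces from the hypothesis on $\sum |\Omega_i| D(\Omega_i)$, then bounds the number $N-M$ of ``big'' pieces by dividing the leftover measure by the minimum size of a big set, combines the two to get a lower bound on the number $M$ of ``small'' pieces, and finally converts that into a lower bound on the measure of the union of small pieces. Your argument bypasses all of this counting: you observe that for a big index the inequality $|\Omega_i| \geq (1+c_1)\pi\eta_0^2$ forces $|\Omega_i|\,D(\Omega_i) = |\Omega_i| - \pi\eta_0^2 \geq \frac{c_1}{1+c_1}|\Omega_i|$, and then a single summation against the hypothesis \eqref{eq:assum2} and the identity $\frac{1+c_1}{c_1} = \frac{1}{c_1}+1$ yields exactly the bound $\frac{d_2}{c_1}+d_2$. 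Both routes produce the same constant, but yours makes the mechanism transparent (big sets carry a proportionally large share of the deviation budget) and avoids the auxiliary quantities $N$ and $M$ altogether. The slight mismatch between the strict inequality in the statement and the $\geq$ in \eqref{eq:c1} is harmless in either argument, since bounding the larger set (with $\geq$) automatically bounds the smaller one.
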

\begin{proof}
 From  \eqref{eq:eta}, \eqref{eq:assum2} and the definition of $D(\Omega_i)$, we get that
$$ d_2 \geq \sum_{i=1}^{N}{\frac{|\Omega_i|}{|\Omega|}D(\Omega_i)} = \sum_{i=1}^{N}{(|\Omega_i| - \pi\eta_0^2)} = 1 - N\pi\eta_0^2$$
and therefore
$$ N \geq \frac{1-d_2}{\pi \eta_0^2}.$$ 
Now, let us suppose that $0 \leq M \leq N$ elements of the partition are 'small' in the sense of satisfying $|\Omega_i| \leq (1+c_1)\pi\eta_0^2$. We wish to show that
$M$ itself has to be big. Trivially,
$$ \left| \bigcup_{|\Omega_i| \leq (1+c_1)\pi \eta_0^2}^{}{\Omega_i} \right| \geq M\pi \eta_0^2.$$

The remaining measure is divided among big sets, hence the number of 'big' elements is at most the remaining
measure divided by the smallest possible area a 'big' set can have
$$ N - M \leq \frac{1-M\pi\eta_0^2}{(1+c_1)\pi\eta_0^2}$$
and thus, in total,
$$ \frac{1-d_2}{\pi \eta_0^2} \leq N = M + (N-M) \leq M +\frac{1-M\pi\eta_0^2}{(1+c_1)\pi\eta_0^2}.$$
Rewriting gives
$$ M \geq \frac{c_1-d_2-d_2c_1}{c_1 \pi \eta_0^2},$$
which implies 
$$ \left|  \bigcup_{|\Omega_i| \leq (1+c_1)\pi \eta_0^2}^{}{\Omega_i} \right| \geq \frac{c_1-d_2-d_2c_1}{c_1}$$
and therefore, since $|\Omega| = 1$,
$$ \left|  \bigcup_{|\Omega_i| > (1+c_1)\pi \eta_0^2}^{}{\Omega_i} \right| \leq \frac{d_2}{c_1}+d_2.$$
\end{proof}

\subsection{A neighbourhood of the union of large sets has small measure.} In the last section we have
seen that measure of the set of large disks is small. However, we actually require a slightly stronger
statement showing that an entire neighbourhood of that set is still small. For future use, we define the 
index set $I$ of partition elements with 'big' measure
$$ I = \left\{i \in \left\{1, \dots, N\right\}: |\Omega_i| \geq (1+c_1)\pi \eta_0^2\right\}.$$
\begin{lemma} A $2\eta_0-$neighbourhood of $ \bigcup_{i \in I}{B_i}$ has small measure: we have
\begin{equation} \label{eq:neigh1}
 \left| \left\{x \in \Omega: \left\| x -  \bigcup_{i \in I}{B_i}\right\| \leq 2\eta_0\right\}\right| \leq \frac{9d_2}{c_1} + 9d_2.
\end{equation}
\end{lemma}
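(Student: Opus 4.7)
The plan is to reduce this statement directly to the previous lemma via a crude volume expansion argument. The key observation is that each disk $B_i$ attached to a big set is itself comparable in radius to $\eta_0$, so enlarging $B_i$ by $2\eta_0$ only inflates its area by a bounded factor. Specifically, for $i \in I$ the identity $\pi r_i^2 = |B_i| = |\Omega_i| \geq (1+c_1)\pi \eta_0^2$ gives $r_i \geq \eta_0$. The $2\eta_0$-neighbourhood of the disk $B_i$ is a disk of radius $r_i + 2\eta_0$, and since $r_i \geq \eta_0$ we have $r_i + 2\eta_0 \leq 3 r_i$. Consequently
$$ \left|\{x \in \mathbb{R}^2 : \|x - B_i\| \leq 2\eta_0\}\right| = \pi (r_i + 2\eta_0)^2 \leq 9 \pi r_i^2 = 9 |B_i| = 9 |\Omega_i|.$$

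Next I would pass from the individual disks to the union by subadditivity of measure: the $2\eta_0$-neighbourhood of $\bigcup_{i \in I} B_i$ is contained in $\bigcup_{i \in I} \{x : \|x - B_i\| \leq 2\eta_0\}$, so
$$ \left|\left\{x \in \Omega : \left\|x - \bigcup_{i \in I} B_i\right\| \leq 2 \eta_0\right\}\right| \leq \sum_{i \in I} 9 |\Omega_i| = 9 \left| \bigcup_{i \in I} \Omega_i\right|,$$
where the last equality uses the disjointness of the $\Omega_i$. The previous lemma bounds the right-hand side by $9(d_2/c_1 + d_2)$, which is exactly the desired estimate.

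Honestly there is no real obstacle here: the argument is a two-line enlargement of the previous lemma, and its only content is the conversion from $|\bigcup \Omega_i|$ to the measure of a thickened union of the associated Fraenkel disks. The factor $9$ comes from the ratio $(r_i + 2\eta_0)/r_i$ being at most $3$ uniformly in $i \in I$; one could do slightly better by exploiting $r_i \geq \sqrt{1+c_1}\, \eta_0$, but that would only change constants and is irrelevant since the two absorbed terms will later be balanced against other error terms. The only thing worth noting for later use is that the argument is wasteful precisely because of the pointwise bound $r_i + 2\eta_0 \leq 3 r_i$: the bound becomes tight only when $r_i$ is at its smallest, i.e. $r_i \approx \eta_0$, whereas most of the mass in $\bigcup_{i \in I} \Omega_i$ can come from disks much larger than $\eta_0$, for which the relative enlargement is much smaller.
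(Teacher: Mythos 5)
Your proof is correct and follows essentially the same route as the paper: bound the $2\eta_0$-neighbourhood of each disk $B_i$ by a dilate of fixed ratio, pass to the union by subadditivity, and invoke the previous lemma. The only cosmetic difference is that you use $r_i \geq \eta_0$ to get the factor $9$ directly, whereas the paper uses the slightly sharper bound $r_i \geq \sqrt{1+c_1}\,\eta_0$ and observes $(\sqrt{1+c_1}+2)^2/(1+c_1) \leq 9$; as you note, this changes nothing.
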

\begin{proof}
 This argument is very simple: the $2\eta_0-$neighbourhood of a disk with radius $r$ has measure $(r+2\eta_0)^2\pi$. The worst case
is precisely the case, where all $B_i$ are well-separated such that their $2\eta_0-$neighbourhoods do not intersect (otherwise:
move the disks apart to create a neighbourhood with bigger measure). In this case, the total measure gets amplified by factor
$$ \frac{(\sqrt{1+c_1}+2)^2\eta_0^2 \pi}{(1+c_1)\eta_0^2 \pi} \leq 9$$
and the result follows from \eqref{eq:bound1}.
\end{proof}

\subsection{Most small sets have well-separated balls.} By now, we have a good control on the 'large' disks and their neighbourhood:
we can (mentally and later in the proof literally) remove them from the stage and consider the remaining small disks. It remains to 
control their intersections.

\begin{lemma} The union of 'small' disks $B_i$, $i \neq I$, for which there exists another 'small' disk such that they intersect
strongly in the sense of \eqref{eq:c2} is bounded by
 \begin{equation} \label{eq:bound2} 
 \left| \bigcup_{i \notin I}^{}{\left\{B_i: \exists_{j \notin I} i \neq j:~ |x_i-x_j| \leq (1-c_2)(r_i+r_j)\right\}} \right| \leq \frac{20\pi}{37}\frac{1+c_1}{c_2^{3/2}}d_1
\end{equation}
\end{lemma}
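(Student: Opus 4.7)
My plan is to combine a purely geometric lens-area estimate with the disjointness of the $\Omega_i$, turning each strong overlap into a quantitative consumption of the globally bounded Fraenkel defect. Write $J$ for the index set in question and set $V_i := B_i\setminus\Omega_i$, so that $|V_i| = \tfrac{1}{2}\mathcal{A}(\Omega_i)|\Omega_i|$ and in particular $\sum_{i=1}^N |V_i| \leq d_1/2$ by \eqref{eq:assum1}.

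The geometric input is the following bound. For any two disks $B_i,B_j$ with radii in $[\eta_0,\sqrt{1+c_1}\,\eta_0]$ and centers at distance at most $(1-c_2)(r_i+r_j)$, the classical formula for the area of the intersection of two disks combined with the expansion $\cos^{-1}(1-c_2) = \sqrt{2c_2}+O(c_2^{3/2})$ yields $|B_i\cap B_j|\geq\kappa\, c_2^{3/2}\eta_0^2$ for an explicit constant $\kappa$, the infimum being attained at equal radii $r_i=r_j=\eta_0$ and the worst admissible separation $d=2(1-c_2)\eta_0$. On the other hand, $\Omega_i\cap\Omega_j=\emptyset$ forces $B_i\cap B_j\subset V_i\cup V_j$, hence
\[
\kappa\, c_2^{3/2}\eta_0^2 \;\leq\; |B_i\cap B_j| \;\leq\; |V_i|+|V_j| \;=\; \tfrac{1}{2}\bigl(\mathcal{A}(\Omega_i)|\Omega_i|+\mathcal{A}(\Omega_j)|\Omega_j|\bigr).
\]

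For each $i\in J$ I then fix a witness $j(i)\in J$ realizing the strong overlap. Summing the preceding inequality over $i\in J$ and invoking $\sum_i|V_i|\leq d_1/2$ should give $|J|\leq C\,d_1/(c_2^{3/2}\eta_0^2)$, from which the lemma follows at once from the trivial estimate $|\bigcup_{i\in J}B_i|\leq |J|\cdot(1+c_1)\pi\eta_0^2$.

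The delicate point is the summation: the term $\sum_{i\in J} |V_{j(i)}|$ could in principle overcount a single heavy $|V_k|$ if many witnesses coincide at $k$. Here I would use the disjointness of the $\Omega_i$ a second time to cap this multiplicity. If $B_k$ is strongly overlapped by $B_{i_1},\dots,B_{i_m}$, then all centers $x_{i_\ell}$ lie within $2\sqrt{1+c_1}\,\eta_0$ of $x_k$, so the $B_{i_\ell}$ sit inside a fixed ball of area $9(1+c_1)\pi\eta_0^2$; for those $i_\ell$ with $\mathcal{A}(\Omega_{i_\ell})\leq 1$ each disjoint piece $\Omega_{i_\ell}\cap B_{i_\ell}$ has area at least $\pi\eta_0^2/2$, which forces $m=O_{c_1}(1)$, while the exceptional $i_\ell$ with $\mathcal{A}(\Omega_{i_\ell})>1$ are globally few (since $\sum_i\mathcal{A}(\Omega_i)|\Omega_i|\leq d_1$) and the union of their $B_i$'s has measure at most $(1+c_1)d_1$, easily absorbed. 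Combining the multiplicity bound with a maximal matching in the strong-overlap graph on $J$ should yield the explicit constant $\tfrac{20\pi}{37}$. The geometric and disjointness ingredients are routine; the main obstacle is the combinatorial bookkeeping needed to track the correct numerical constant through this last step.
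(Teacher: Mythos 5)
Your overall strategy matches the paper's: a lower bound on the lens area $|B_i\cap B_j|$ in terms of $c_2^{3/2}\eta_0^2$, combined with the disjointness of $\Omega_i$ and $\Omega_j$, to turn each strong overlap into a forced consumption of the global Fraenkel budget $d_1$. Your observation $B_i\cap B_j\subset(B_i\setminus\Omega_i)\cup(B_j\setminus\Omega_j)$ is even slightly sharper than the paper's \eqref{eq:diamond} (it has an extra factor $\tfrac12$ on the right), so the geometric ingredients are fine.

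The gap is in the combinatorial step, exactly where you flag it. The paper does not fix witnesses and bound multiplicity; it instead argues by a monotonicity/rearrangement that one may assume the intersection graph on $\{B_i:i\in J\}$ is a \emph{disjoint matching} (chains, then pairs), after which each pair covers area $\leq 2(1+c_1)\pi\eta_0^2$ while consuming budget $\geq\frac{37}{10}c_2^{3/2}\eta_0^2$, and the constant $\frac{20\pi}{37}$ drops out as the ratio of these two quantities. In your scheme, summing $|V_i|+|V_{j(i)}|$ over $i\in J$ requires controlling how often a single $k$ is chosen as a witness, and your packing argument (centers within $2\sqrt{1+c_1}\,\eta_0$, each $\Omega_{i_\ell}\cap B_{i_\ell}$ of area $\geq\pi\eta_0^2/2$ inside a ball of area $9(1+c_1)\pi\eta_0^2$) gives a multiplicity bound of order $18(1+c_1)$, not $O(1)$ in a useful sense; and the exceptional indices with $\mathcal{A}(\Omega_i)>1$ still need to be woven back in. Even granting all that, the resulting constant is a multiplicity factor worse than $\frac{20\pi}{37}$, so the lemma as stated is not recovered along this route. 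You would either need to carry out the paper's reduction to disjoint pairs (which sidesteps multiplicity entirely), or replace the naive witness sum by a genuine \emph{maximal matching} argument and then show the unmatched-vertex contribution is negligible — but as written, ``maximal matching ... should yield the explicit constant'' is the missing step, not a routine one.
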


\begin{proof}
 
 For simplicity, we introduce the index set
$$ J = \left\{i \notin I: \exists B_i~~ \exists_{j \notin I} ~i \neq j:~ |x_i-x_j| \leq (1-c_2)(r_i+r_j)\right\}.$$
We will now derive an upper bound on the measure of the set, which we now can abbreviate as $\cup_{j \in J}{B_j},$
using nothing but the inequality \eqref{eq:assum2}
$$  \sum_{i=1}^N{|\Omega_i|\mathcal{A}(\Omega_{i})} \leq d_1.$$

Suppose $i \in J$. Then there exists a $j \in J$ such that the balls $B_i, B_j$ have controlled radius (this follows
automatically from the fact that both disks are 'small')
$$ \eta_0 \leq r_i, r_j \leq \sqrt{1+c_1}\eta_0$$
and intersect in a quantitatively controlled way
$$ |x_i-x_j| \leq (1-c_2)(r_i+r_j).$$
Then the intersection $B_i \cap B_j$ is of interest: if the Fraenkel asymmetry of $\Omega_i$ is to be small, then almost
all of its measure should be contained in $B_i$ but the very same reasoning also holds for $\Omega_j$ and $B_j$. In particular,
since every point in the intersection can only belong to one of the two sets, we have 
\begin{equation} \label{eq:diamond}
|\Omega_i|\mathcal{A}(\Omega_i) + |\Omega_j|\mathcal{A}(\Omega_j) \geq |B_i \cap B_j|.
\end{equation}
It remains to compute the quantity $|B_i \cap B_j|$. Using scaling invariance, we may assume $\eta_0 = 1$. We are then dealing with
two disks in the Euclidean plane whose radii $r_1, r_2$ are bounded from below by 1 and whose centers $x_1, x_2$ satisfy
$$ d := |x_1-x_2| \leq (1-c_2)(r_1+r_2).$$
Elementary Euclidean geometry yields 
\begin{align*}
 |B_i \cap B_j| &= r_1^2 \arccos{\left(\frac{d^2 + r_1^2 - r_2^2}{2dr_1}\right)}  
+  r_2^2 \arccos{\left(\frac{d^2 + r_2^2 - r_1^2}{2dr_2}\right)} \\
&-\frac{1}{2}\sqrt{(-d+r_1+r_2)(d+r_1-r_2)(d-r_1+r_2)(d+r_1+r_2)}.
\end{align*}
Easy but tedious calculation give that the quantity is decreasing in both radii and as such minimized for $r_1 = r_2 = 1$. This
is then a one-dimensional function in $c_2$ and it is easy to show that for $c_2 \leq 0.05$ the function is
$$2\arccos{\left(1-c_2\right)} - 2\sqrt{(2-c_2)(1-c_2)^2c_2} \geq \frac{37}{10}c_2^{\frac{3}{2}}.$$
Recalling the normalization $\eta_0 = 1$, we get the scale-invariant estimate
 $$|B_i \cap B_j| \geq \frac{37}{10}c_2^{\frac{3}{2}}\eta_0.$$

A priori, the intersection patterns of $\left\{B_i: i \in J\right\}$ can be very complicated. However, there is a very
simple monotonicity: we can remove areas, where three or more balls intersect and arrange the balls in (possibly more
than one) chain. This increases the area and decreases the area of intersection.
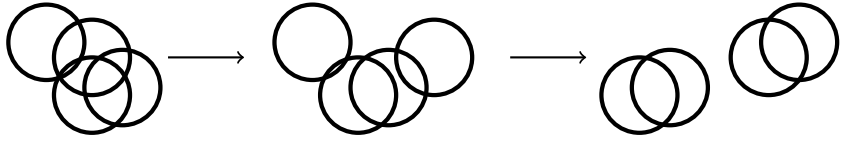
\begin{figure}[h!]
\begin{tikzpicture}
\draw [ultra thick] (0,0) circle [radius=0.5];
\draw [ultra thick] (0.4,0.1) circle [radius=0.5];
\draw [ultra thick] (-0.6,0.7) circle [radius=0.5];
\draw [ultra thick] (0,0.5) circle [radius=0.5];
\draw [->, thick] (1,0.5) -- (2,0.5);
\draw [ultra thick] (3.5,0) circle [radius=0.5];
\draw [ultra thick] (3.9,0.1) circle [radius=0.5];
\draw [ultra thick] (2.9,0.7) circle [radius=0.5];
\draw [ultra thick] (4.5,0.5) circle [radius=0.5];
\draw [->, thick] (5.5,0.5) -- (6.5,0.5);
\draw [ultra thick] (7.2,0) circle [radius=0.5];
\draw [ultra thick] (7.6,0.1) circle [radius=0.5];
\draw [ultra thick] (9.3,0.7) circle [radius=0.5];
\draw [ultra thick] (8.9,0.5) circle [radius=0.5];
\end{tikzpicture}
\caption{Increasing area while decreasing average Fraenkel asymmetry}
\end{figure}
By the same argument, the area further increases if we assume that any disk in $\left\{B_i: i \in J\right\}$
touches precisely one other disk (i.e. the intersection pattern reduces to that of pairs of disks intersecting
each other and no disk). Any such (i.e. intersecting) pair of disks $B_i, B_j$ satisfies
$$ |B_i \cup B_j| \leq 2(1+c_1)\pi \eta_0^2$$
as well as
$$ |B_i \cap B_j| \geq \frac{37}{10}c_2^{\frac{3}{2}}\eta_0,$$
which is connected via \eqref{eq:diamond} to the sum 
$$  \sum_{i=1}^N{|\Omega_i|\mathcal{A}(\Omega_{i})} \leq d_1.$$
Thus
$$  \left| \bigcup_{j \in J}^{}{B_j} \right|
\leq \left[ 2(1+c_1)\pi \eta_0^2 \right]\frac{d_1}{\frac{37}{10}c_2^{3/2}\eta_0^2} = \frac{20\pi}{37}\frac{1+c_1}{c_2^{3/2}}d_1.$$
\end{proof}

\subsection{Bounds on the size of the neighbourhood of strongly intersecting small disks.}
By applying the very same reasoning as in Section 3.4, we could argue that by considering an entire $2\eta_0$-neighbourhood the measure
gets amplified by a factor of at most 9. This is perfectly reasonable but can actually be improved as we are now dealing with disks
intersecting other disks. We are thus studying the following problem: given two disks $B_1, B_2$ with radii $r_1, r_2 \geq \eta_0$
intersecting in precisely one point, what bounds can be proven on
$$ \frac{ |\left\{x \in \mathbb{R}^2: \|x-(B_1 \cup B_2)\| \leq 2\eta_0\right\}|}{|B_1| + |B_2|} \leq ?$$
This problem can be explicitely solved using elementary calculus and reduces to a case-distinction and two
integrations; we leave the details to the interested reader. Carrying out the calculations gives
$$ \frac{ |\left\{x \in \mathbb{R}^2: \|x-(B_1 \cup B_2)\| \leq 2\eta_0\right\}|}{|B_1| + |B_2|} \leq \frac{9}{2} + \frac{2\sqrt{2}}{\pi} + \frac{9}{\pi}\arcsin{\left(\frac{1}{3}\right)} \sim 6.37\dots \leq \frac{32}{5}$$
with equality for $r_1 = r_2 = 1$. Arguing as in Section 3.4 and using \eqref{eq:bound2}, we get 
\begin{equation} \label{eq:neigh2}
 \left| \left\{x \in \Omega: \left\| x -  \bigcup_{j \in J}^{}{B_j} \right\| \leq 2\eta_0\right\}\right| 
\leq   \frac{128\pi}{37}\frac{(1+c_1)}{c_2^{3/2}}d_1
\end{equation}

\subsection{Finding a dense disk packing.} We conclude our argument by deriving the existence of a disk
packing in the plane with impossible properties. Here, we employ an aforementioned result of Blind \cite{blind} 
that also played a role in Bourgain's argument and was mentioned before: the packing density of a collection of disks in the 
plane with radii $a_1, a_2, \dots$ satisfying $a_i \geq (3/4) a_j$ for all $i,j$ is bounded from above by $\pi/\sqrt{12}.$\\

A rough outline of the remainder of the argument is as follows
\begin{enumerate}
 \item consider the set of Fraenkel disks $\left\{B_i: 1\leq i \leq N\right\}$
 \item remove all 'big' disks
 \item remove all remaining 'small' disks strongly intersecting another small disk
 \item shrink all remaining disks by a factor $(1-c_2)$.
\end{enumerate}
This leaves us with a set of \textit{disjoint} disks in the Euclidean plane with roughly the same radius and we
can apply Blind's result -- the argument has one big flaw, of course, removing elements from a set does \textit{not}
increase its packing density (just think of a hexagonal packing of disks: if we remove the little triangle-shaped
gaps between the disks, packing density goes up to 1). \\

We counter the problem by not only removing 'big' disks or 'small' disks strongly intersecting other small
disks but an entire $2\eta_0-$neighbourhood of these sets as well. Doing this is equivalent to assuming that while
we created holes in the middle of the set, these holes are of such a shape that within a neighbourhood we can
actually achieve packing density 1.\\

From \eqref{eq:neigh1} and \eqref{eq:neigh2}, we get that the set
$$ \Omega^* := \Omega \setminus \left( \left\{x \in \Omega: \left\| x -  \bigcup_{i \in I}{B_i}\right\| \leq 2\eta_0\right\}  \cup 
\left\{x \in \Omega: \left\| x -  \bigcup_{j \in J}^{}{B_j} \right\| \leq 2\eta_0\right\}\right)$$
satisfies
$$ |\Omega^*| \geq 1- \left( \frac{9d_2}{c_1} + 9d_2 + \frac{128\pi}{37}\frac{(1+c_1)}{c_2^{3/2}}d_1 \right).$$
$\Omega^*$ consists of disks with radii satisfying
$$ \eta_0 \leq r_i \leq \sqrt{1+c_1}\eta_0$$
and with the additional property that the centers of any two disks are well-seperated
$$|x_i-x_j| \geq (1-c_2)(r_i+r_j).$$
By shrinking all these disks by a factor of $1-c_2$ while keeping their center in the same place, they become disjoint. Thus, from Blind's result
$$ \left|(1-c_2)\Omega^*\right| \leq (1-c_2)^2\left[1- \left( \frac{9d_2}{c_1} + 9d_2 + \frac{128\pi}{37}\frac{(1+c_1)}{c_2^{3/2}}d_1 \right)\right] \leq \frac{\pi}{\sqrt{12}}.$$ 
We need to find a set of parameters, for which the inequality fails. Indeed, setting
$$c_1 = \frac{1}{250} \qquad \mbox{and} \qquad c_2 = \frac{7}{250},$$
we get for any $d_1, d_2 \geq 0$ with
$$ d_1 + d_2 = \frac{1}{60000},$$
that
$$(1-c_2)^2\left[1- \left( \frac{9d_2}{c_1} + 9d_2 + \frac{128\pi}{37}\frac{(1+c_1)}{c_2^{3/2}}d_1 \right)\right] \geq \frac{\pi}{\sqrt{12}} + \frac{1}{1000}.$$
This contradiction proves the statement. $\qed$\\

\textbf{Remark.} The weakest point in the argument is certainly the last step, where we remove an entire $2\eta_0-$neighhbourhood. Intuition
suggests that we should be able that maybe even removing merely a $\eta_0$-neighbourhood should be more than sufficient, however, we have
not been able to make progress on that question, which would certainly be the most natural starting point if one wanted to improve the constant
using arguments along these lines.

\section{Proof of the general case.} Here we give a proof of Theorem 2 in general dimensions (which contains Theorem 1 for $n \geq 3$ as a special case). 
This section essentially recapitulates the previous argument without caring about the actual numerical values at all. The new ingredient is the following insight:
in the proof of Theorem 1, after a careful geometric analysis, we did end up with the inequality
$$(1-c_2)^2\left[1- \left( \frac{9d_2}{c_1} + 9d_2 + \frac{128\pi}{37}\frac{(1+c_1)}{c_2^{3/2}}d_1 \right)\right] \geq \frac{\pi}{\sqrt{12}} + \frac{1}{1000}.$$
The crucial point is the following: no matter what actual numerical values are placed in front, by choosing $d_2 \ll c_1$ and $d_1 \ll c_2$, the inequality will
always be false for $c_1, c_2$ sufficiently close to 1 by simple continuity. In the previous proof, it was our goal to keep $d_1, d_2$ as large as possible but once we discard this concern, we can much more
wasteful in the actual geometric estimates.

\begin{proof} The argument is again by contradiction. $\eta_0$ plays a similar same role as before, we define it via
$$ \eta_0 = \left(\min_{1 \leq j \leq N}{|\Omega_j|}\right)^{1/n}.$$
The constant $c_1$ again determines whether a domain is 'big', which we define to be the case if
$$ |\Omega_i| \geq (1+c_1) \min_{1 \leq j \leq N}{|\Omega_j|}.$$
The precise meaning of $c_2$ is introduced further below. Arguing by contradiction we assume that
$$ \left(\sum_{i=1}^{N}{\frac{|\Omega_i|}{|\Omega|}\mathcal{A}_{K}(\Omega_i)}\right)+\left(\sum_{i=1}^{N}{\frac{|\Omega_i|}{|\Omega|}D(\Omega_i)}\right) \leq c$$
and want to derive a contradiction for $c$ sufficiently small.
Following the same argument as before, we again get a bound on the number of large sets
$$ \left|  \bigcup_{|\Omega_i| >(1+c_1)\eta_0^n}^{}{\Omega_i} \right| \leq \frac{c}{c_1}+c.$$
Switching again to the Fraenkel bodies $K_1, \dots, K_N$, we wish to remove a $c_3 \eta_0$ neighbourhood
of any 'large' Fraenkel body $K_i$, where $c_3 < \infty$ is chosen such that $c_3 \eta_0$ is many multiples
of the diameter of a 'small' $K_i$ having measure at most $(1+c_1)\eta_0^n$. This allows us to bound the size of a $c_3 \eta_0$
neighbourhood of 
 $$\bigcup_{|\Omega_i| >(1+c_1)\eta_0^n}^{}{K_i}$$
by $c_4(c/c_1+c)$ for some finite constant $c_4$. The constant $c_2$ now measures whether two 'small' Fraenkel
bodies have large intersection, writing again
$$ I = \left\{i \in \left\{1, \dots, N\right\}: |\Omega_i| \geq (1+c_1)\eta_0^n\right\},$$
we consider
\begin{align*}
\bigcup_{i \notin I}^{}{\left\{K_i: \exists_j i \neq j \notin I:~ |(K_i \cap K_j)| \geq c_2 \eta_0^n\right\}}.
\end{align*}
The same argument as before implies that for any two elements in the set, we get
$$ \mathcal{A}_{K}(K_i)|K_i| + \mathcal{A}_{K}(K_j)|K_j| \geq c_2 \eta_0^n.$$
Since 
$$  \left(\sum_{i=1}^{N}{\frac{|\Omega_i|}{|\Omega|}\mathcal{A}_{K}(\Omega_i)}\right) \leq c,$$
this implies a bound on the measure of the set 
$$  \left| \bigcup_{i \notin I}^{}{\left\{K_i: \exists_j i \neq j \notin I:~ |(K_i \cap K_j)| \geq c_2 \eta_0^n\right\}} \right| \leq c_5 c$$
for some constant $c_5 < \infty$ and a bound of the form $c_6c$ on the measure of its $c_3 \eta_0$ neighbourhood. Finally, since the boundary of the convex body $K$
contains no line segment, we get that for every $\varepsilon_1 > 0$ there is a $\varepsilon_2 > 0$ such 
that any collection $K_1, K_2, \dots$ of nonoverlapping rotated and scaled translates of $K$ in the plane with volumes $v_1, v_2, \dots$ satisfying 
$$ \inf_{i,j}{\frac{v_i}{v_j}} \geq 1-\varepsilon_1$$
has packing density at most $1-\varepsilon_2$. Finally, there exists a constant $c_7$ such that for any two scaled, translated copies $K_1, K_2$ of $K$ with
$$|(K_i \cap K_j)| \leq c_2 \eta_0^n,$$
the rescaled bodies $c_7 K_1, c_7K_2$ (rescaling being done in a way to fix, say, their center of mass) satisfy
$$ (c_7 K_1) \cap (c_7 K_2) = \emptyset.$$
Note that the optimal $c_7$ depends continuously on $c_2$ and tends to 1 as $c_2$ tends to 0. Now, following the same argument as before, we can derive the inequality
$$ 1- \varepsilon_2 \geq c_7^n\left(1-\frac{c_4 c}{c_1} - c_4 c - c_6 c\right).$$
The dependence is easy: pick some $0 < \varepsilon_1 \ll 1$. This yields $\varepsilon_2 > 0$. Given $\varepsilon_1$, pick $c_1 \ll \varepsilon_1$. We pick
$c_2$ so small that $c_7^n > 1-\varepsilon_2$. $c_4$ and $c_6$ are again externally given but the inequality can now be seen to be false
if $c=0$. By continuity $c > 0$.
\end{proof}

\subsection{Proof of the improved Pleijel estimate.} 
The Corollary has a very simple proof: as in the proof of Pleijel's estimate, we get a lower bound on
$$ \min_{1 \leq i \leq N}{|\Omega_i|}$$
from the Faber-Krahn inequality. Theorem 1 now implies that either not all elements in the
partition are of that size (in which case some need to be bigger and their requirement for more spaces
allows for a smaller number of nodal domains) or that some deviate from the disk in a controlled way
(in which case stability estimates require them to have a larger measure).

\begin{proof} Let $$ \Omega = \bigcup_{i=1}^{N}{\Omega_i}$$
be the decomposition introduced by a Laplacian eigenfunction with eigenvalue $\lambda \gg 1$ and let $\eta_0 = \eta_0(\lambda)$ be chosen
in such a way that $\pi \eta_0^2 = |B|,$ where $B$ is the disk such that $\lambda_1(B) = \lambda$. 
Theorem 1 yields that
$$\sum_{i=1}^N{\frac{|\Omega_i|}{|\Omega|}\left(\mathcal{A}(\Omega_{i}) + D(\Omega_{i})\right) } \geq c$$
for some $c \geq 1/60000$; therefore, either
$$ \sum_{i=1}^{N}{\frac{|\Omega_i|}{|\Omega|}D(\Omega_{i})} \geq \frac{c}{2} \qquad \mbox{or} \qquad \sum_{i=1}^{N}{\frac{|\Omega_i|}{|\Omega|}\mathcal{A}(\Omega_{i})} \geq \frac{c}{2}.$$
Suppose the first inequality holds. Then
$$ \frac{c}{2} \leq  \sum_{i=1}^{N}{\frac{|\Omega_i|}{|\Omega|}D(\Omega_{i})} = \frac{1}{|\Omega|}\left(|\Omega| - N\pi \eta_0^2\right)$$
in which case
$$ N \leq \left(1-\frac{c}{2}\right)\frac{|\Omega|}{\pi \eta_0^2}.$$
The fact that Pleijel's argument is sharp for a partition into equally sized disks (or, equivalently, Weyl's law) implies
$$ \lim_{\lambda \rightarrow \infty}{\frac{|\Omega|}{\pi \eta_0^2}} = \left(\frac{2}{j}\right)^2n$$
and this yields the result. Suppose the second inequality holds. We start with a simple Lemma.
\begin{lemma}
 We have
\begin{equation} \label{eq:lem}
 \left| \bigcup_{\mathcal{A}(\Omega_i) \geq \frac{c}{6}}{ \Omega_i} \right| \geq \frac{c}{6} |\Omega|.
\end{equation}

\end{lemma}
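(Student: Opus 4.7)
The plan is a straightforward averaging/Markov-type argument applied to the hypothesis
$$ \sum_{i=1}^{N}{\frac{|\Omega_i|}{|\Omega|}\mathcal{A}(\Omega_{i})} \geq \frac{c}{2},$$
using the trivial upper bound $\mathcal{A}(\Omega_i) \leq 2$. The idea is simply that if the weighted average of $\mathcal{A}(\Omega_i)$ is at least $c/2$, then a nontrivial fraction of the total mass must be carried by indices where $\mathcal{A}(\Omega_i)$ is itself comparable to $c$.

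Concretely, I would split the index set into $S = \{i : \mathcal{A}(\Omega_i) \geq c/6\}$ and its complement and estimate the two partial sums separately. On the complement, each term is at most $c/6$ times the corresponding weight, so
$$ \sum_{i \notin S}{\frac{|\Omega_i|}{|\Omega|}\mathcal{A}(\Omega_i)} \leq \frac{c}{6}\sum_{i \notin S}{\frac{|\Omega_i|}{|\Omega|}} \leq \frac{c}{6}.$$
On $S$, I would use $\mathcal{A}(\Omega_i) \leq 2$ to get
$$ \sum_{i \in S}{\frac{|\Omega_i|}{|\Omega|}\mathcal{A}(\Omega_i)} \leq 2\cdot\frac{|\bigcup_{i \in S}\Omega_i|}{|\Omega|}.$$

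Plugging both into the hypothesis and rearranging gives
$$ \frac{c}{2} \leq \frac{c}{6} + 2\cdot\frac{|\bigcup_{i \in S}\Omega_i|}{|\Omega|},$$
so $|\bigcup_{i \in S}\Omega_i| \geq \frac{c}{6}|\Omega|$, which is exactly \eqref{eq:lem}. There is no real obstacle here: the only mildly delicate point is choosing the threshold $c/6$ so that both the ``small $\mathcal{A}$'' contribution and the leftover from the ``large $\mathcal{A}$'' bound (after dividing by $2$) come out to the same clean value; any threshold strictly between $0$ and $c/2$ would give a bound of the same flavor, and the symmetric choice $c/6$ is what produces the stated constant.
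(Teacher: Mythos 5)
Your proposal is correct and is essentially the same argument as the paper's: both split the sum over indices according to whether $\mathcal{A}(\Omega_i)$ is above or below the threshold $c/6$, bound the small part by $c/6$ and the large part by $2|\bigcup_{i\in S}\Omega_i|/|\Omega|$, and compare with the hypothesis $\sum_i \frac{|\Omega_i|}{|\Omega|}\mathcal{A}(\Omega_i)\geq c/2$. The only cosmetic difference is that the paper phrases it as a proof by contradiction while you rearrange directly.
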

\textit{Proof of the Lemma.} Suppose the statement was false. Then, using $\mathcal{A}(\Omega_i) \leq 2$,
\begin{align*}
 \frac{c}{2} &\leq \sum_{i=1}^{N}{\frac{|\Omega_i|}{|\Omega|}\mathcal{A}(\Omega_{i})}  \\
&< \frac{2}{|\Omega|} \left| \bigcup_{\mathcal{A}(\Omega_i) \geq  \frac{c}{6}}{ \Omega_i} \right| + 
\frac{c}{6}\frac{1}{|\Omega|} \left| \bigcup_{\mathcal{A}(\Omega_i) \leq  \frac{c}{6}}{ \Omega_i} \right| \\
&\leq \frac{c}{3} + \frac{c}{6} = \frac{c}{2}. \qedhere
\end{align*}

Now we recall some stability estimates 
for the Faber-Krahn inequality in terms of Fraenkel asymmetry. 
Brasco, De Philippis \& Velichkov \cite{bra} (improving an earlier result of Fusco, Maggi \& Pratelli \cite{fus}) 
have shown that
$$ \frac{\lambda_1(\Omega) - \lambda_1(\Omega_0)}{\lambda_1(\Omega_0)} \gtrsim  \mathcal{A}(\Omega)^2,$$
where $\Omega_0$ is again the disk with $|\Omega_0| = |\Omega|$.\\

Pick any domain $\Omega_i$ with $\mathcal{A}(\Omega_i) \geq c/6$ and use $B$ to denote the disk such that $|B| = |\Omega_i|$. The stability estimate
$$ \frac{\lambda_1(\Omega_i) - \lambda_1(B)}{\lambda_1(B)} \geq C \cdot \mathcal{A}(\Omega_i)^2$$
 can be rewritten as
\begin{equation} \label{eq:ok} \lambda_1(\Omega_i) \geq \left(1+C\frac{c^2}{36}\right)\lambda_1(B)\end{equation}
Recall that $\eta_0$ is chosen such that the disk $D$ of radius $\eta_0$ satisfies $\lambda_1(D) = \lambda_n(\Omega)$. However,
by \eqref{eq:ok}, we know that $\lambda_1(\Omega_i)$ is a multiplicative factor larger than the first eigenvalue of the disk
of equal measure. In order for $\lambda_1(\Omega_i) \leq \lambda_n(\Omega)$ to still be satisfied, we require that
\begin{equation}
 \label{eq:plei}
\frac{|\Omega_i|}{\pi \eta_0^2} \geq 1+C\frac{c^2}{36}.
\end{equation}

We use this as follows:
$$\Omega = \bigcup_{i=1}^{N}{\Omega_i} = \left(\bigcup_{\mathcal{A}(\Omega_i) \geq  \frac{c}{6}}{\Omega_i}\right)
\cup \left(\bigcup_{\mathcal{A}(\Omega_i) \leq  \frac{c}{6}}{\Omega_i}\right)$$
By Pleijel's argument, the number of nodal domains in the second set is bounded from above by
$$ \frac{1}{\pi \eta_0^2}\left|\bigcup_{\mathcal{A}(\Omega_i) \leq  \frac{c}{6}}{\Omega_i}\right|$$
while \eqref{eq:plei} implies the number of nodal domains in the first set is bounded by
$$ \frac{1}{\pi \eta_0^2}\frac{1}{1+C\frac{c^2}{36}}\left|\bigcup_{\mathcal{A}(\Omega_i) \geq  \frac{c}{6}}{\Omega_i}\right|.$$
Using \eqref{eq:lem} and $|\Omega| = 1$, we get
\begin{align*}
 \frac{1}{\pi \eta_0^2}\frac{1}{1+C\frac{c^2}{36}}\left|\bigcup_{\mathcal{A}(\Omega_i) \geq  \frac{c}{6}}{\Omega_i}\right| +
\frac{1}{\pi \eta_0^2}\left|\bigcup_{\mathcal{A}(\Omega_i) \leq  \frac{c}{6}}{\Omega_i}\right| &\leq 
\frac{1}{\pi \eta_0^2}\frac{1}{1+C\frac{c^2}{36}}\frac{c}{6} + \frac{1}{\pi \eta_0^2}\left(1-\frac{c}{6}\right) \\
&\leq  \left(1-\frac{c^3 C}{216 + 6 c^2 C}\right)\frac{1}{\pi \eta_0^2}.
\end{align*}
By definition of $\eta_0$, the expression $(\pi \eta_0^2)^{-1}$ is precisely the upper bound of Pleijel on the number of
nodal domains and thus
\begin{align*}
 N \leq \left(1-\frac{c^3 C}{216 + 6 c^2 C}\right)\left(\frac{2}{j}\right)^2 n
\end{align*}
for $n$ sufficiently big.
\end{proof}

\subsection{Proof of the Spectral Partition Inequality.} 
\begin{proof} Suppose the statement was false. Then there exists some smooth, bounded $\Omega$ such that for any $\varepsilon > 0$ there are
arbitrarily large $k$ such that there are partitions
$$ \Omega = \bigcup_{i=1}^{k}{\Omega_i}$$
with
$$\frac{1}{k}\sum_{i=1}^{k}{\lambda_1(\Omega_i)} \leq  (\pi j^2+\varepsilon)\frac{k}{|\Omega|}.$$
Let us start by re-iterating the proof of the original estimate. The Faber-Krahn inequality implies 
$$ \frac{1}{k}\sum_{i=1}^{k}{\lambda_1(\Omega_i)} \geq \frac{1}{k}\sum_{i=1}^{k}{\frac{\pi j^2}{|\Omega_i|}}.$$
The convexity of $x \rightarrow 1/x$ and the fact that
$$ \sum_{i=1}^{k}{|\Omega_i|} = |\Omega|$$
immediately imply that
$$\frac{1}{k}\sum_{i=1}^{k}{\frac{\pi j^2}{|\Omega_i|}} \geq k\frac{\pi j^2}{|\Omega|}$$
with equality if and only if $|\Omega_i| = |\Omega|/k$ for all $1 \leq i \leq k$. We can quantify the
notion of convexity a little bit. Indeed, assuming our desired spectral partition inequality to be false and given
any $\delta > 0$, we can find a subsequence of partitions with
$$ \frac{1}{k_j}\#\left\{1 \leq i \leq k_j: (1-\delta)|\Omega| \leq  k_j|\Omega_i| \leq (1+\delta)|\Omega|\right\} \rightarrow 1.$$
This, however, means that we can find a subsequence of partitions with the propery that
$$\left(\sum_{i=1}^{k_j}{\frac{|\Omega_i|}{|\Omega|}D(\Omega_i)}\right) \leq \delta',$$
where $\delta' > 0$ can be as small as we wish. Then, however, the geometric uncertainty principle implies that
$$\left(\sum_{i=1}^{k_j}{\frac{|\Omega_i|}{|\Omega|}\mathcal{A}(\Omega_i)}\right) \geq c_2 - \delta',$$
where $c_2 > 1/60000$ is the optimal constant in two dimensions. Then, however, arguing as before, we
can improve on Pleijel's estimate.
\end{proof}

\textbf{Acknowledgments.} The author is grateful for various discussions with Bernhard Helffer, who taught him about
spectral partition problems; this interaction took place at a workshop in the Banff International Research Station
which is to be thanked for its hospitality. Suggestions from an anonymous referee greatly increased the quality of
exposition.

\end{document}